\pgfplotsset{compat=1.6}
\pgfplotsset{soldot/.style={color=blue,only marks,mark=*}} \pgfplotsset{holdot/.style={color=blue,fill=white,only marks,mark=*}}
\numberwithin{equation}{section}
\numberwithin{equation}{section}
\newcommand\Ann{\textnormal{Ann}}
\newcommand\E{\mathbb{E}}
\newcommand\Z{\mathbb{Z}}
\newcommand\N{\mathbb{N}}
\renewcommand\P{\mathbb{P}}
\DeclarePairedDelimiter\floor{\lfloor}{\rfloor}
\theoremstyle{plain}
\newtheorem{Th}{Theorem}[section]
\newtheorem{Lemma}[Th]{Lemma}
\newtheorem{Prop}[Th]{Proposition}
 \theoremstyle{definition}
\newtheorem{Def}[Th]{Definition}
\newtheorem{?}[Th]{Problem}
\renewcommand\hat{\widehat}
\def\XXint#1#2#3{{\setbox0=\hbox{$#1{#2#3}{\int}$ }
\vcenter{\hbox{$#2#3$ }}\kern-.6\wd0}}
\newcounter{cnstcnt}
\newcommand\txtarrowlr[2]{\xleftrightarrow[ \text{ #2 }]{ \text{ #1 } } }
\author{Michael Damron\(^1\)}
\address{\(^1\)Georgia Institute of Technology, 686 Cherry St., Atlanta, GA 30332 USA}
\email[\(^1\)]{mdamron6@gatech.edu}
\author{David Harper\(^2\)}
\address{\(^2\)Georgia Institute of Technology, 686 Cherry St., Atlanta, GA 30332 USA}
\email[\(^2\)]{dharper40@gatech.edu}
\begin{document}

	\title{Non-Optimality of Invaded Geodesics in 2d Critical First-Passage Percolation}
	\date{}
	%
	%

	\begin{abstract}
	 We study the critical case of first-passage percolation in two dimensions. Letting $(t_e)$ be i.i.d.~nonnegative weights assigned to the edges of $\mathbb{Z}^2$ with $\mathbb{P}(t_e=0)=1/2$, consider the induced pseudometric (passage time) $T(x,y)$ for vertices $x,y$. It was shown in \cite{Asymptotics} that the growth of the sequence $\mathbb{E}T(0,\partial B(n))$ (where $B(n) = [-n,n]^2$) has the same order (up to a constant factor) as the sequence $\mathbb{E}T^{\text{inv}}(0,\partial B(n))$. This second passage time is the minimal total weight of any path from 0 to $\partial B(n)$ that resides in a certain embedded invasion percolation cluster. In this paper, we show that this constant factor cannot be taken to be 1. That is, there exists $c>0$ such that for all $n$,
	 \[
	 \mathbb{E}T^{\text{inv}}(0,\partial B(n))  \geq (1+c) \mathbb{E}T(0,\partial B(n)).
	 \]
	This result implies that the time constant for the model is different than that for the related invasion model, and that geodesics in the two models have different structure.
%
%
%
%
	\end{abstract}	 
	
	\maketitle
	
\section{Introduction}

\subsection{Background and main result}



We begin with the definition of first-passage percolation (FPP). Consider the integer lattice $\mathbb{Z}^2$ with $\mathcal{E}^2$ denoting the set of nearest-neighbor edges, and let $(t_e)_{e \in \mathcal{E}^2}$ be an i.i.d.~family of nonnegative random variables (edge-weights) with common distribution function $F$. For $x,y \in \Z^2$, a (vertex self-avoiding) path from $x$ to $y$ is a sequence $ ( v_0 , e_1 , v_1,\dots ,e_n,v_n)$, where the $v_i$'s, $i = 1,...,n-1$, are distinct vertices in $\Z^2$ which are different from $x$ or $y$, and $v_0 = x $, $v_n = y$, $e_i = \{v_{i-1}, v_i\} \in \mathcal{E}^2$. If $x = y$, the path is called a (vertex self-avoiding) circuit. We define the passage time of a path $\gamma$ to be $ T(\gamma) = \sum_{i =1}^n t_{e_i} $. For any $A,B \subset \Z^2$ we denote
\[ 
T(A,B) = \inf \{ T(\gamma) : \gamma \text{ is a path from a vertex in } A \text{ to a vertex in } B \}. 
\] 
For $A = \{x \} $ we write $T(x,B) $ to mean $T(\{x\},B)$ and similarly for $T(A,x)$. A geodesic from $A$ to $B$ is a path $\gamma$ from $A$ to $B$ such that $T(\gamma) = T(A,B)$. Note that $T=T(x,y)$ as a function of vertices $x,y$ is a psuedometric, and is a.s.~a metric if and only if $F(0)=0$. Thus $(\mathbb{Z}^2,T)$ can be regarded as a random pseudometric space.

FPP is studied as a model for fluid flow in a porous medium, or of the spread of a stochastic growth, such as a bacterial infection. It was introduced in 1965 by Hammersley and Welsh \cite{HW} and since then, researchers have developed some of the basics of the theory including asymptotics for $T(0,x)$ as $x \to \infty$, shape theorems, fluctuations of $T$, and geometry of geodesics (see \cite{ADH} for a recent survey). Analysis of the model is quite different depending on the relationship between $F(0)$ and the critical value $p_c=1/2$ for two-dimensional Bernoulli percolation. In the supercritical case, where $F(0)>1/2$, there exists an infinite cluster (component) of edges with zero weight, and one can then show that $T(0,x)$ is stochastically bounded in $x$. (To reach $x$ from 0, just enter the infinite cluster and travel near to $x$ in zero time.) In the subcritical (and most studied) case, where $F(0)<1/2$, $T(0,x)$ grows linearly in $x$, and there are many results and conjectures about the precise rate of growth. 

The critical case which we study here, where $F(0)=p_c = 1/2$, is considerably more subtle and it is closely related to near-critical and critical bond percolation. There is no infinite cluster of zero-weight edges, but there are large zero-weight clusters on all scales. Here, the usual ``time constant,'' defined as
\[
\mu = \lim_{n \to \infty} \frac{T(0,ne_1)}{n}
\]
is known to be zero (from Kesten's result \cite[Theorem~6.1]{aspects} that $\mu = 0$ if and only if $F(0) \geq 1/2$), so it is natural to ask for the correct (sublinear) growth rate of $T$. Instead of $T(0,ne_1)$, it is more convenient to consider $T(0,\partial B(n))$, where $B(n) = [-n,n]^2$, and after important work of Chayes-Chayes-Durrett \cite{CCD} and Zhang \cite{Zhangdouble}, it was shown by Damron-Lam-Wang in \cite[Theorem~1.2]{Asymptotics} that
\begin{equation}\label{eq: asymptotics}
\mathbb{E}T(0,\partial B(n)) \asymp \sum_{k=1}^{\lfloor \log n \rfloor} F^{-1}\left( \frac{1}{2} + \frac{1}{2^k}\right),
\end{equation}
where $a_n \asymp b_n$ means that $b_n/a_n$ is bounded away from 0 and $\infty$, and $F^{-1}$ is the following generalized inverse of $F$:
\[
F^{-1}(t) = \inf \{ x : F(x) \geq t \} \text{ for }t > 0.
\]
To prove this result, the authors introduced an embedded invasion percolation cluster (an infinite connected subgraph $I$ of $\mathbb{Z}^2$ containing the origin which we will define in the next section), and showed that 
\begin{equation}\label{eq: invasion_comparison}
\mathbb{E}T(0,\partial B(n)) \asymp \mathbb{E}T^{\text{inv}}(0,\partial B(n)),
\end{equation}
where $T^{\text{inv}}$ is defined analogously to $T$, but only using paths which remain in $I$ (see \eqref{eq: T_inv_def}). They then argued that $\mathbb{E}T^{\text{inv}}(0,\partial B(n)) \asymp$ the right side of \eqref{eq: asymptotics}.

The main result of our work implies that the symbol $\asymp$ in the comparison \eqref{eq: invasion_comparison} cannot be replaced by the stronger $\sim$. In other words, the ratio of the left and right sides does not converge to 1: the invasion passage time is only a good approximation for the true passage time up to a constant factor. Therefore, local properties of geodesics or the passage time cannot be studied by a comparison to invasion.


\begin{Th}\label{thm: main_thm}
Suppose that $F(0)=p_c = 1/2$. There exists $c_{\ref{thm: main_thm} . 1}>0$ such that for all large $n$,
\[
\mathbb{E}[T^{\text{inv}}(0,\partial B(n)) - T(0, \partial B(n))]  \geq c_{\ref{thm: main_thm} . 1}  \sum_{k=1}^{\lfloor \log n \rfloor} F^{-1}\left( \frac{1}{2} + \frac{1}{2^k}\right).
\]
\end{Th}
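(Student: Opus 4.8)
The plan is to exhibit, on a large set of scales $k$, a specific local modification of the invasion geodesic that strictly decreases the passage time by an amount comparable to $F^{-1}(1/2 + 2^{-k})$, and then to sum these savings over a positive fraction of the scales $k \le \lfloor \log n\rfloor$. Working dyadically, set $A_k = B(2^{k+1})\setminus B(2^k)$. The key structural fact is that on each annulus $A_k$, with probability bounded below, the zero-weight cluster of the origin does \emph{not} cross $A_k$ (there is a zero-weight dual circuit), so that $T^{\mathrm{inv}}$ and $T$ must both pay to traverse $A_k$; and the cheapest way to traverse $A_k$ using \emph{unrestricted} paths can be strictly cheaper than the cheapest way using \emph{only edges of $I$}, because the invasion rule forces $I$ to avoid the lowest-weight closed edges in certain configurations. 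More precisely, on scale $k$ the relevant edge weights that must be crossed are of order $F^{-1}(1/2 + 2^{-k})$ (this is exactly the heuristic behind \eqref{eq: asymptotics}); I want to build a deterministic ``gadget'' configuration in $A_k$ — a finite pattern of zero and small-but-positive edges — on which (i) any $I$-path crossing $A_k$ picks up at least two such weights, while (ii) there is an unrestricted path crossing $A_k$ that picks up only one, the difference being $\ge \tfrac12 F^{-1}(1/2 + 2^{-k})$ or a similar explicit constant times it.

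The steps, in order. \textbf{Step 1:} Set up the dyadic decomposition and, via the standard RSW/FKG machinery for critical percolation (as used in \cite{CCD, Zhangdouble, Asymptotics}), show that for each $k$ there is an event $E_k$, depending only on edges in a bounded neighborhood of $A_k$, with $\mathbb{P}(E_k) \ge \delta > 0$ uniformly in $k$, on which: the zero-set of $I$ restricted to $A_k$ looks like a prescribed gadget, there is a zero-weight dual circuit forcing every path from $0$ to $\partial B(n)$ to cross the gadget, and the invaded region $I$ in $A_k$ is pinned down enough that any $I$-path is forced through the ``expensive'' part of the gadget. \textbf{Step 2:} On $E_k$, condition on the positions of the finitely many relevant positive-weight edges and use the continuity/left-continuity of $F^{-1}$ together with $F(0)=1/2$ to show that the typical magnitude of those weights is $\asymp F^{-1}(1/2+2^{-k})$, and deduce a pointwise lower bound $T^{\mathrm{inv}}(0,\partial B(n)) - T(0,\partial B(n)) \ge c\, F^{-1}(1/2+2^{-k}) \mathbf{1}_{E_k}$ — this requires knowing that the detour realizing the cheaper unrestricted crossing does not create a compensating loss elsewhere, which is where one uses that $E_k$ is a local event and the modification is supported in $A_k$. \textbf{Step 3:} Take expectations and sum over $k$: $\mathbb{E}[T^{\mathrm{inv}} - T] \ge \sum_k c\,\delta\, F^{-1}(1/2+2^{-k}) \gtrsim \sum_{k=1}^{\lfloor \log n\rfloor} F^{-1}(1/2 + 2^{-k})$, using that there are $\asymp \log n$ scales and that consecutive terms $F^{-1}(1/2+2^{-k})$ and $F^{-1}(1/2+2^{-(k+1)})$ are comparable up to rearranging the sum (or simply keeping every other scale to ensure the gadget events are independent).

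The main obstacle is \textbf{Step 1–2}: one must rule out the ``global compensation'' scenario in which the invasion geodesic, although forced to overpay on scale $k$, is routed so that it underpays on some other scale, making the net difference vanish in expectation. Handling this requires either (a) a conditioning argument showing that the $T$-geodesic and the $T^{\mathrm{inv}}$-geodesic can be coupled to agree outside the gadget annuli on the event $E_k$, so the comparison is genuinely local, or (b) a resampling/martingale argument: reveal everything except the gadget edges in $A_k$, and show that conditionally the gap is $\ge c F^{-1}(1/2+2^{-k})$ with probability $\ge \delta$, regardless of the outside configuration. I expect (b) to be the cleaner route — it turns the global statement into a uniform conditional lower bound at each scale — but making ``the gadget forces every $I$-path to overpay'' precise will require a careful analysis of the invasion dynamics restricted to $A_k$, since $I$ is defined by a global growth rule and one must argue that the relevant portion of $I$ is determined by the local configuration on $E_k$.
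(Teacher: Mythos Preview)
Your overall strategy matches the paper's: build a local gadget on each annulus where the invasion geodesic overpays, then sum over scales. But the ``global compensation'' obstacle you flag is not resolved by your routes (a) or (b), and the paper handles it by a mechanism you have missed. On the event $E_k$ the paper places $p_c$-open circuits $C_k \subset \Ann(3^k,3^{k+1})$ and $D_k \subset \Ann(3^{k+2},3^{k+3})$ around the origin. Since $F(0)=1/2$ these have zero weight, and since every $p_c$-open circuit around the origin lies in $I$ (see \eqref{eq: contain_circuits}), both $T$ and $T^{\text{inv}}$ decompose \emph{exactly} as $T(0,C_k)+T(C_k,D_k)+T(D_k,\partial B(n))$ (and likewise for $T^{\text{inv}}$). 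Subtracting and using $T^{\text{inv}}\ge T$ on the outer pieces gives
\[
T^{\text{inv}}(0,\partial B(n)) - T(0,\partial B(n)) \ \ge\ \sum_k \bigl(T^{\text{inv}}(C_k,D_k)-T(C_k,D_k)\bigr)\mathbf{1}_{E_k},
\]
a genuinely local sum with no cross-scale leakage. Your resampling route (b) cannot deliver this: the invasion is a global greedy process, and resampling edges in one annulus can reroute $I$ arbitrarily beyond it, so there is no uniform conditional lower bound on the gap. Route (a) is likewise hopeless without the circuits, since there is no reason the two geodesics should agree outside $A_k$.

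Two further issues. First, the gadget must be built with near-critical parameters and, crucially, must include a $q_{k+1}$-open path from the outer $p_c$-open circuit to infinity: this is what forces the invasion, once it reaches the inner circuit, to invade only edges of weight at most $p_c+\alpha(q_{k+1}-p_c)$, hence to pass through all three ``expensive'' edges $e_1,e_2,e_3$ (weights just above $q_{k+1}$) and never through the cheaper detour edge $e_4$ (weight above $p_c+\beta(q_k-p_c)>p_c+\alpha(q_{k+1}-p_c)$). Your description of $E_k$ as fixing ``the zero-set of $I$'' and a ``zero-weight dual circuit'' is not the right picture; the separating dual circuits are $q_k$-closed, not $p_c$-closed. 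Second, your claim that consecutive terms $F^{-1}(1/2+2^{-k})$ are comparable is false in general; the paper restricts to the good set $G=\{k:F^{-1}(q_k)\le 2F^{-1}(q_{k+1})\}$ and proves separately (Lemma~\ref{lem: Good_Indices}) that $\sum_{k\in G,\,k\le n}F^{-1}(q_{k+1})\asymp \sum_{k\le n} F^{-1}(q_k)$ when the latter diverges, while the summable case $\sum_k F^{-1}(q_k)<\infty$ requires an entirely different, explicit finite construction.
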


In Section~\ref{sec: coupling} below, we define the embedded invasion percolation model, and give some important properties of critical and near-critical percolation used in the paper. In Section~\ref{sec: outline}, we give an outline of the proof of Theorem~\ref{thm: main_thm}, and in Section~\ref{sec: main_proof} we give the full proof. Throughout the paper, constants will be denoted by $c$ or $C$ depending on whether they are large or small, and their subscripts refer to the sections in which they are defined.



\subsection{Coupled percolation models}\label{sec: coupling}

We will couple the FPP model on $(\Z^2, \mathcal{E}^2)$ with invasion percolation and Bernoulli percolation.  To describe the coupling, we consider the probability space $ (\Omega , \mathcal{F} , \P )$ , where $\Omega = [0,1]^{\mathcal{E}^2} $, $\mathcal{F} $ is the product Borel sigma-field, and $\P = \prod_{e \in \mathcal{E}^2 } \mu_e $, where each $\mu_e $ is the uniform measure on $[0,1]$. Write $\omega = (\omega_e)_{e \in \mathcal{E}^2 } \in \Omega $ so that the coordinates $(\omega_e)$ are i.i.d.~uniform $[0,1]$ random variables, and define the edge weights as $t_e = F^{-1}(\omega_e) $ for $e \in \mathcal{E}^2$, so that the collection $(t_e)$ is i.i.d.~with common distribution function $F$.

The uniform variables $(\omega_e)$ will be used for two models: invasion percolation and Bernoulli percolation.
\begin{itemize}
\item {\bf Invasion percolation} is a another model for a stochastic growth which, unlike FPP, follows a greedy algorithm. Because of its relation to critical Bernoulli percolation, it is known as a model of self-organized criticality. 

To define the growth, we first define, the edge boundary $\Delta G$ of an arbitrary subgraph $G = (V,E)$ of $(\Z^2,\mathcal{E}^2)$ by
\[ 
\Delta G = \{ e \in \mathcal{E}^2 : e \notin E , e \text{ has an endpoint in } V \}. 
\]
Next, the invasion proceeds in discrete time, as a sequence $(G_n)_{n = 0}^{\infty}$ of subgraphs of $(\mathbb{Z}^2,\mathcal{E}^2)$ as follows. Let $G_0 = ( \{0\}, \emptyset ) $. Given $G_i = (V_i,E_i)$, we define $E_{i+1} = E_i \cup \{e_{i+1}\} $, where $e_{i+1}$ is the a.s.~unique edge with $\omega_{e_{i+1}} = \min \{ \omega_{e} : e \in \Delta G_i \}$, and let $G_{i+1}$ be the graph induced by $E_{i+1}$. The graph $I = \cup_{i=0}^{\infty} G_i $ is called the invasion percolation cluster (at time infinity). 

If $A,B$ are subsets of $\mathbb{Z}^2$, we set
\begin{equation}\label{eq: T_inv_def}
T^{\text{inv}}(A,B) = \inf_{\gamma: A\txtarrowlr{\textit{I}}{} B} T(\gamma),
\end{equation}
where the infimum is over all paths from $A$ to $B$ which remain in the invasion $I$. (Here, $\inf \emptyset$ is defined as $+\infty$.) This $T^{\text{inv}}$ is the passage time to which we compare $T$ in Theorem~\ref{thm: main_thm}. 

\item {\bf Bernoulli percolation} is a simple model for a random network. The usual setup for Bernoulli percolation requires us to choose a parameter $p \in [0,1]$ and then independently declare each edge in our graph $(\mathbb{Z}^2,\mathcal{E}^2)$ to be open with probability $p$ and closed with probability $1-p$. Using our uniform variables, we can couple all of these models (for different values of $p$) to the other models described above. For each $e \in \mathcal{E}^2$ and $p \in [0,1]$, we say that an edge $e$ is $p$-open in $\omega$ if $\omega_e \leq p$ and otherwise say that $e$ is $p$-closed. Then for any fixed $p$, the collection of $p$-open edges has the same distribution as the set of open edges in Bernoulli percolation with parameter $p$.
\end{itemize}

Next we give a couple of definitions that will help us work with these models. For $p \in [0,1]$, a path (or circuit) is said to be $p$-open (respectively $p$-closed) if all its edges are $p$-open (respectively $p$-closed). Recall that all paths and circuits are assumed to be vertex self-avoiding. The interior of a circuit is the bounded component of its complement, when the circuit is viewed as a Jordan curve in the plane, and the interior is a subset of $\mathbb{R}^2$. The dual graph of $\mathbb{Z}^2$ is written as $((\mathbb{Z}^2)^*, (\mathcal{E}^2)^*)$, where
\[
(\mathbb{Z}^2)^* = \mathbb{Z}^2 + \left( \frac{1}{2}, \frac{1}{2}\right) \text{ and } (\mathcal{E}^2)^* = \mathcal{E}^2 + \left( \frac{1}{2},\frac{1}{2}\right).
\]
For $x \in \mathbb{Z}^2$, the vertex dual to $x$, written $x^*$, is defined as $x+(1/2,1/2)$, and for $e \in \mathcal{E}^2$, the edge dual to $e$, written $e^*$, is the unique element of $(\mathcal{E}^2)^*$ which bisects $e$. The percolation model on the original lattice induces one on the dual lattice in the natural way: a dual edge $e^*$ is said to be $p$-open (for $p \in [0,1]$) if $e$ is, and is said to be $p$-closed otherwise.

One relation between invasion percolation and Bernoulli percolation is the following: if the invasion intersects a $p_c$-open cluster (maximal connected set of $p_c$-open edges), it must contain the whole cluster. Indeed, if it were to intersect such a cluster but not contain the entire cluster, then for all large $n$, there would be a $p_c$-open edge on the edge boundary of $G_n$. Due to the invasion's greedy algorithm, it therefore would only invade edges that are $p_c$-open for all large times, and this implies that there is an infinite $p_c$-open cluster, a contradiction. As a consequence of this fact, we obtain
\begin{equation}\label{eq: contain_circuits}
\text{a.s., }I\text{ contains all }p_c\text{-open circuits around the origin.}
\end{equation}

A central tool used to study invasion percolation is correlation length and we take its definition from \cite[Eq.~1.21]{kestenscaling}. For $n \in \N$ and $p \in (p_c , 1 ] $, let 
\[ 
\sigma(n,m,p) = \P( \text{there is a } p \text{-open left-right crossing of } [0,n] \times [0,m] ), 
\]
where the term ``$p$-open left-right crossing of $[0,n] \times [0,m]$'' means a path in $[0,n] \times [0,m]$ with all edges $p$-open which joins some vertex in $\{0\} \times [0,m]$ to some vertex in $\{n\} \times [0,m]$. For $ \epsilon > 0 $ and $p > p_c ,$ we define
\begin{equation*}
L(p,\epsilon) = \min \{ n : \sigma (  n , n , p) > 1 - \epsilon \} .
\end{equation*}
$L(p,\epsilon)$ is called the (finite-size scaling) correlation length.  It is known that $\lim_{p \downarrow p_c} L(p,\epsilon) = \infty$ for $\epsilon>0$ and that there exists $\epsilon_1 > 0$ such that for all $0 < \epsilon , \epsilon' \leq \epsilon_1 $, one has $L(p,\epsilon) \asymp L(p,\epsilon')$ as $p \downarrow p_c$. We will therefore define $L(p) = L(p,\epsilon_1) $ with this fixed $\epsilon_1$ for simplicity. For $n \geq 1 $, let
\begin{equation*}
p_n = \min \{ p > p_c : L(p) \leq n \},
\end{equation*}
and define
\[
q_k = p_{3^{k}} \text{ for } k \geq 0.
\]
We note here that 
\begin{equation}\label{eq: comparable_sum}
\sum_{k=1}^n F^{-1}(q_k) \asymp \sum_{k=1}^n F^{-1}\left( \frac{1}{2} + \frac{1}{2^k}\right) \text{ as } n \to \infty.
\end{equation}
This follows from the fact that $n^{-\delta_0} < p_n-p_c < n^{-\epsilon_0}$ for some $\delta_0,\epsilon_0>0$ and $n \geq 2$ (explained in \cite[Eq.~(2.5)]{Asymptotics}) and from monotonicity of $F^{-1}$ (for instance, see \cite[Lemma~4.1]{Asymptotics}).

We list the following properties of correlation length, with references to their proofs.
\begin{enumerate}


\item \label{eq: comparable_p_n} There exist $c_{1.2.1},C_{1.2.1} > 0 $ such that
\begin{equation}\label{eq: comparable_p_n_equation}
c_{1.2.1} \Big | \log \dfrac{m}{n} \Big | \leq \Big | \log \dfrac{p_m - p_c}{p_n - p_c} \Big | \leq C_{1.2.1} \Big |  \log \dfrac{m}{n} \Big | \text{ for all } m,n \geq 1.
\end{equation}
This is a consequence of \cite[Prop.~34]{NolinNearCritical}.
\item There exists $c_{1.2.2}>0$ such that for all $n \geq 1$,
\begin{equation}\label{eq: L_p_n}
c_{1.2.2} n \leq L(p_n) \leq n.
\end{equation}
This follows from \cite[Eq.~(2.10)]{Jarai}.
\item There exist $c_{1.2.3}, C_{1.2.3}>0$ such that for all $p>p_c$,
\begin{equation}\label{eq: scaling_relation}
c_{1.2.3} \leq L(p)^2 \pi_4(L(p)) (p-p_c) \leq  C_{1.2.3},
\end{equation}
where $\pi_4(n)$ is the probability that there are two vertex-disjoint (except their initial points) $p_c$-open paths connecting the origin to $\partial B(n)$, and two vertex-disjoint (except for their initial points) $p_c$-closed dual paths connecting the point $(1/2,1/2)$ to $\partial B(n)$. This relation appears as \cite[Prop.~34]{kestenscaling}.
\item From \cite[Section~4]{nguyen}, there exists $c_{1.2.4}>0$ such that for all $n \geq 1$,
\begin{equation}\label{eq: connect_to_infinity}
\mathbb{P}(B(n) \text{ is connected to }\infty \text{ by a }p_n\text{-open path}) \geq c_{1.2.4}.
\end{equation}
Here, ``$\ldots$ connected to $\infty \ldots$'' means that there is an infinite vertex self-avoiding $p_n$-open path starting in $B(n)$.
\end{enumerate}


\subsection{Outline of proof}\label{sec: outline}

The proof of Theorem~\ref{thm: main_thm} is split into two cases. At the end of Section~\ref{sec: end}, we assume that $\sum_{k}F^{-1}(q_k) < \infty$, and we explicitly construct an event $A$ (whose definition is below \eqref{eq: R_def}) with positive probability such that on $A$, for all $n \geq R$,
\[ 
T^{\text{inv}}(0, \partial B(n) ) - T(0, \partial B(n) ) \geq b. 
\]
Here $b,R$ are positive constants. This is sufficient to show that for $n \geq R$,
\[
\mathbb{E} (T^{\text{inv}}(0, \partial B(n) ) - T(0, \partial B(n) ) ) \geq b \mathbb{P}(A) > 0,
\]
and this is at least a constant times $\sum_k F^{-1}(q_k)$. The comparison \eqref{eq: comparable_sum} then finishes the proof in this case.

For the rest of the outline, we therefore assume that $\sum_{k}F^{-1}(p_{3^{k}}) = \infty$. For large $n$, we consider subannuli of $B(n)$ of the form $\Ann(3^k,3^{k+3} ) = B(3^{k+3}) \setminus B(3^k)$ for $k = 0,...,  \floor{ \log_3 n } - 3$ and in Section~\ref{sec: E_k_def} define events $(E_k)$, which are illustrated in Figure~\ref{fig:Config1}, depending on the state of edges in these annuli. Two of the paths involved in the definition of $E_k$ are a $p_c$-open circuit around the origin in $\Ann(3^{k},3^{k+1})$ and another $p_c$-open circuit around the origin in $\Ann(3^{k+2}, 3^{k+3})$ (see $\gamma_1^1$ and $\gamma_2^1$ in Definition~\ref{def: E_k}). Letting $C_k$ and $D_k$ be the outermost and innermost such circuits respectively, the fact that they have zero total weight and are contained in the invasion (see \eqref{eq: contain_circuits}) implies that the difference $\Delta = T^{\text{inv}} - T$ satisfies
\[
\Delta(0,\partial B(n)) \geq \Delta (C_k, D_k)\mathbf{1}_{E_k}
\]
and furthermore (see \eqref{eq: pizza_head})
\[
\Delta(0,\partial B(n)) \geq \Delta(C_1,D_1) \mathbf{1}_{E_1} + \Delta(C_4,D_4) \mathbf{1}_{E_4} + \cdots + \Delta(C_r,D_r)\mathbf{1}_{E_{3r+1}},
\]
where $r$ is the largest integer with $3^{3r+4} \leq n$. (Here we consider only $E_k$'s with values of $k$ differing by at least 3 to ensure that their associated annuli are disjoint.) 

%
 
To bound the terms in the sum, we define a set of ``good'' indices $G = \{ k : F^{-1}(q_k) < 2 F^{-1}(q_{k+1})\}$ and we show in \eqref{eq: part_one} and \eqref{eq: part_two} that for such values of $k$, if $E_k$ occurs, then the passage time $T^{\text{inv}}(C_k,D_k)$ is at least $3 F^{-1}(q_{k+1})$, while the ordinary passage time $T(C_k,D_k)$ is at most $2F^{-1}(q_{k+1}) $. This is possible because on $E_k$, any path in the invasion that crosses $\text{Ann}(3^{k+1}, 3^{k+2})$ must contain the edges $e_1,e_2,e_3$ (which have weights $\geq q_{k+1}$) shown in Figure~\ref{fig:Config1}, whereas an unrestricted path may simply take edge $e_4$ (which has weight $\leq q_k$). This implies that
 \[ 
 \E \Delta(C_k,D_k) \textbf{1}_{E_k} \geq  F^{-1}(q_{k+1}) \mathbb{P}(E_k),
 \]
and combining this with the above inequality,
\[ 
\E \Delta(0,\partial B(n)) \geq  \left( \inf_\ell \mathbb{P}(E_\ell)\right) \times \sum_{ \substack{ k : 3k+1 \in G \\ 3k+3 \leq \lfloor \log_3 n \rfloor }}F^{-1}(q_{3k+4}).
\]
Similarly, we can obtain
\[ 
\E \Delta(0,\partial B(n)) \geq  \frac{1}{3} \left( \inf_\ell \mathbb{P}(E_\ell)\right) \times \sum_{ \substack{ k  \in G \\ k+3 \leq \lfloor \log_3 n \rfloor }}F^{-1}(q_{k+1}).
\]
(Compare to \eqref{eq: last_part}.) In Section~\ref{sec: E_k_bound}, we show that the infimum is positive, and so because the definition of $G$ entails that
\[ 
\sum_{ \substack{ k : k \in G \\ k+3 \leq \lfloor \log_3 n \rfloor }}F^{-1}(q_{k+1}) \asymp \sum_{ k: k+3 \leq \lfloor \log_3 n \rfloor }F^{-1}(q_k)
\] 
(from Lemma~\ref{lem: Good_Indices}), we can finish the proof with another application of \eqref{eq: comparable_sum}.

\section{Proof of Theorem~\ref{thm: main_thm}}\label{sec: main_proof}

\subsection{Step 1: Definition of $E_k$}\label{sec: E_k_def}


In this section, we define events $(E_k)_{k \geq 0}$ whose occurrence allows us to give a lower bound for $T^{\text{inv}}-T$. To state this bound precisely, we define $C_k$ to be the outermost $p_c$-open circuit around the origin in $\text{Ann}(3^k,3^{k+1})$ (if it exists) and let $D_k$ be the innermost $p_c$-open circuit around the origin in $\text{Ann}(3^{k+2},3^{k+3})$. (On $E_k$, these circuits will always exist --- see the first two bullet points of Definition~\ref{def: E_k}.)
The event $E_k$ will be constructed so that for $n \geq 0$ and $k=0, \dots, \lfloor \log_3 n \rfloor - 3$, if $k$ is in a certain ``good'' set of indices
\begin{equation}\label{eq: G_def}
G = \{k \geq 0 : F^{-1}(q_k) \leq 2F^{-1}(q_{k+1})\},
\end{equation}
then
\begin{equation}\label{eq: gain_on_E_k}
(T^{\text{inv}}(C_k,D_k)-T(C_k,D_k)) \mathbf{1}_{E_k} \geq F^{-1}(q_{k+1}) \mathbf{1}_{E_k} \text{ a.s.}
\end{equation}
(Recall that $C_k$ and $D_k$ are contained in the invasion by \eqref{eq: contain_circuits}.)

In the following definition, we use the notation $R(N) = [0,N] \times [0,N]$ for $N \geq 0$ and $\Ann(m,n) = B(n) \setminus B(m)$ for $0 \leq m \leq n$. Because there are many conditions comprising the event $E_k$, we encourage the reader to consult Figure~\ref{fig:Config1} for an illustration.

\begin{Def}\label{def: E_k} For $k \geq 0$ and real numbers $\alpha,\beta$ with $\alpha > 1$ and $\beta \in [0,1)$, we define the event $E_k = E_k(\alpha,\beta)$ that the following conditions hold.
\begin{itemize}
\item There is a $p_c$-open circuit, $\mathcal{\gamma}_1^1$,  in $\Ann(3^k,3^{k+1})$, which contains $B(3^k)$ in its interior.
\item There is a $p_c$-open circuit, $\mathcal{\gamma}_2^1$,  in $\Ann(3^{k+2},3^{k+3})$, which contains $B(3^{k+2})$ in its interior.
\end{itemize}
There are edges
\begin{itemize}
\item  $e_1 \in B_1 := \big ( - \frac{1}{2} 3^k , \frac{3}{2} 3^{k} \big ) + R(3^k) $ with $\omega_{e_1} \in (q_{k+1},p_c+\alpha(q_{k+1}-p_{c}) ) $,
\item  $e_2 \in B_2 := \big ( - \frac{1}{2} 3^k , \frac{5}{2} 3^{k} \big ) + R(3^k) $ with  $\omega_{e_2} \in (q_{k+1},p_c+\alpha(q_{k+1}-p_{c}) ) $,
\item  $e_3 \in B_3 := \big ( - \frac{1}{2} 3^k , \frac{7}{2} 3^{k} \big ) + R(3^k) $ with $\omega_{e_3} \in (q_{k+1},p_c+\alpha(q_{k+1}-p_{c}) ) $, and
\item  $e_4 \in B_4 := \big (  \frac{1}{2} 3^{k+1} , -\frac{1}{2} 3^{k+1} \big ) + R(3^{k+1}) $ with $\omega_{e_4} \in (p_c + \beta(q_{k}-p_{c}) ,q_{k}) $,
\end{itemize}
such that
\begin{itemize}
\item there is a $p_c$-open path which connects $\gamma_1^1$ to one endpoint of $e_1$, 
\item there is a $p_c$-open path which connects the other endpoint of $e_1$ to one endpoint of $e_2$, 
\item there is a $p_c$-open path which connects the other endpoint of $e_2$ to one endpoint of $e_3$, 
\item there is a $p_c$-open path which connects the other endpoint of $e_3$ to the $p_c$-open circuit $\gamma_{2}^1$,
\item there is a $p_{c}$-open path which connects $\gamma_1^1$ to $e_4$,
\item there is a $p_{c}$-open path which connects the other endpoint of $e_4$ to $\gamma_2^1 $, 
\item there is a $q_{k}$-closed dual path, $\gamma_{1}^{2}$, which connects one endpoint of $e_1^*$ to one endpoint of $e_3^*$, 
\item there is a $q_{k}$-closed dual path, $\gamma_{2}^{2}$, which connects the other endpoint of $e_3^*$ to the other endpoint of $e_1^*$,
\item there is a $q_{k}$-closed dual path, $\gamma_{1}^{3}$, which connects one endpoint of $e_4^*$ to one endpoint of $e_2^*$, 
\item there is a $q_{k}$-closed dual path, $\gamma_{2}^{3}$, which connects the other endpoint of $e_2^*$ to the other endpoint of $e_4^*$, and
\item there is a $q_{k+1}$-open path which connects $\gamma_{2}^{1}$ to $\infty$. 
\end{itemize}
Moreover,
\begin{itemize}
\item $\{e_4^*\} \cup \gamma_1^3 \cup \{e_2^*\} \cup \gamma_2^3$ forms a dual circuit around zero, and
\item $\{e_1^*\} \cup \gamma_2^2 \cup \{e_3^*\} \cup \gamma_1^2$ forms a dual circuit around $e_2^*$. 
\end{itemize}
\end{Def}

\begin{center}
	\begin{figure}
  \includegraphics[width=0.6\linewidth]{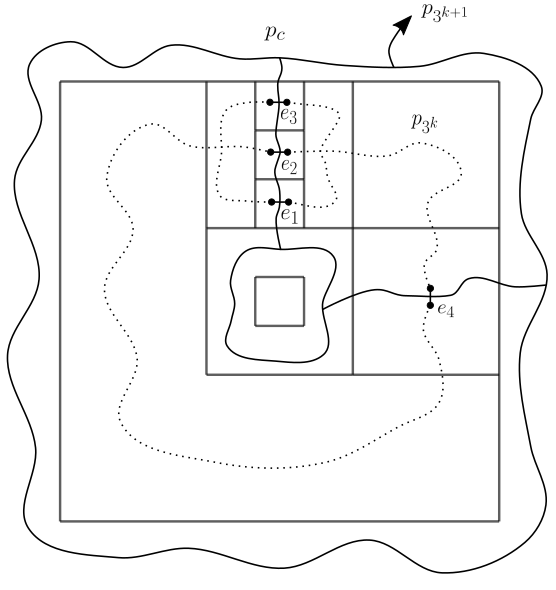}
  \caption{Illustration of the event $E_k$. The innermost box is $B(3^k)$ and the outermost is $B(3^{k+2})$. The solid lines represent $p_c$-open paths, the dotted lines represent $q_{k}$-closed dual paths, while the curve with the arrow indicates a $q_{k+1}$-open path to infinity. When $E_k$ occurs, any path connecting $B(3^{k+1})$ to $\partial B(3^{k+2})$ remaining in the invasion $I$ must contain the edges $e_1,e_2,e_3$ (not $e_4$), whereas a path in the original FPP model may take the edge $e_4$.}
  \label{fig:Config1}
\end{figure}
\end{center}

From this point forward, we pick $\alpha $ and $\beta $ satisfying the inequality in the next lemma.
\begin{Lemma}\label{lem: alpha_beta_choice}
There exist $\alpha,\beta$ with $\alpha>1>\beta>0$ such that for all $k \geq 0$,
\[
p_c+\alpha(q_{k+1}-p_{c}) < p_c + \beta (q_{k}-p_c). 
\] 
\end{Lemma}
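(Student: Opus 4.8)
The plan is to reduce the claim to the asymptotic behavior of $q_k - p_c$ as $k \to \infty$ and choose $\alpha,\beta$ accordingly. Recall $q_k = p_{3^k}$, so by \eqref{eq: comparable_p_n_equation} applied with $m = 3^{k+1}$ and $n = 3^k$, we get
\[
c_{1.2.1} \log 3 \leq \log \frac{q_k - p_c}{q_{k+1} - p_c} \leq C_{1.2.1}\log 3
\]
(the ratio exceeds $1$ since $p_n - p_c$ is nonincreasing in $n$, so the absolute values drop). Exponentiating, there is a constant $\lambda := 3^{c_{1.2.1}} > 1$ with $q_{k+1} - p_c \leq \lambda^{-1}(q_k - p_c)$ for all $k \geq 0$. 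Thus the gap shrinks by a fixed multiplicative factor at each step.

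Now the inequality $p_c + \alpha(q_{k+1}-p_c) < p_c + \beta(q_k - p_c)$ is equivalent, after subtracting $p_c$ and dividing by $q_k - p_c > 0$, to $\alpha \cdot \frac{q_{k+1}-p_c}{q_k - p_c} < \beta$. Since the ratio on the left is at most $\lambda^{-1}$, it suffices to pick $\alpha, \beta$ with $\alpha \lambda^{-1} < \beta$, i.e.\ $\alpha < \lambda \beta$. So I would simply take, say, $\beta = 1/2$ and $\alpha = 1 + \tfrac{1}{2}(\lambda - 1)/2 \cdot \ldots$ — more cleanly, fix any $\beta \in (0,1)$ and then choose $\alpha \in (1, \lambda\beta)$, which is a nonempty interval as soon as $\beta > 1/\lambda$; since we are free to enlarge $\beta$ up to (but not including) $1$, such a choice exists precisely because $\lambda > 1$ guarantees $\lambda \beta > 1$ for $\beta$ close enough to $1$. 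Concretely: pick $\beta = \tfrac{1}{2}(1 + \lambda^{-1}) \in (\lambda^{-1}, 1)$ and then $\alpha = \tfrac{1}{2}(1 + \lambda\beta) \in (1, \lambda\beta)$. This gives $\alpha > 1 > \beta > 0$ and, for every $k$,
\[
\alpha(q_{k+1}-p_c) \leq \alpha \lambda^{-1}(q_k - p_c) < \beta(q_k - p_c),
\]
which is the desired inequality.

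The only genuine content is the uniform-in-$k$ multiplicative gap, which comes directly from property \eqref{eq: comparable_p_n} with the specific ratio $m/n = 3$; once that is in hand, the choice of $\alpha,\beta$ is an elementary manipulation. I do not anticipate a real obstacle here — the one point to be careful about is making sure the constant $\lambda$ is strictly greater than $1$ (this is exactly the lower bound $c_{1.2.1} > 0$ in \eqref{eq: comparable_p_n_equation}) and that $p_n - p_c > 0$ for the relevant $n$, so that division by $q_k - p_c$ is legitimate; the latter holds since $p_n > p_c$ by definition of $p_n$.
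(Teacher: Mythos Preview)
Your proof is correct and follows essentially the same approach as the paper's: both use \eqref{eq: comparable_p_n_equation} with $m/n = 3$ to obtain the uniform bound $(q_k - p_c)/(q_{k+1}-p_c) \geq 3^{c_{1.2.1}} =: \lambda > 1$, and then choose $\alpha,\beta$ close enough to $1$ that $\alpha < \lambda\beta$. Your write-up is simply more explicit about the concrete choice of $\alpha$ and $\beta$.
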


\begin{proof}
Using \eqref{eq: comparable_p_n_equation}, we see that
\[ 
3^{c_{1.2.1}} < \dfrac{q_k - p_c}{q_{k+1} - p_c}.
\] 
So we choose $\alpha$ and $\beta$ sufficiently close to 1 that $\alpha < 3^{c_{1.2.1}} \beta$, and this implies
\[
p_c + \alpha(q_{k+1}-p_c) < p_c + \beta \cdot 3^{c_{1.2.1}}(q_{k+1}-p_c) < p_c + \beta (q_k-p_c).
\]
\end{proof} 

We will bound the probability $\mathbb{P}(E_k)$ from below in the next step. To finish the current step, we estimate the difference $T^{\text{inv}} - T$ when $E_k$ occurs; that is, we now prove inequality \eqref{eq: gain_on_E_k}. So suppose that $n \geq 0$ and $0 \leq k \leq \lfloor \log_3 n \rfloor - 3$. We will show that 
\begin{equation}\label{eq: part_one}
T^{\text{inv}}(C_k,D_k)\mathbf{1}_{E_k} \geq 3F^{-1}(q_{k+1})\mathbf{1}_{E_k} \text{ a.s.}
\end{equation}
and
\begin{equation}\label{eq: part_two}
T(C_k,D_k)\mathbf{1}_{E_k} \leq F^{-1}(q_k)\mathbf{1}_{E_k} \text{ a.s.}
\end{equation}
If we prove these two inequalities, then, under the additional assumption that $k \in G$, we would obtain
\[
(T^{\text{inv}}(C_k,D_k)-T(C_k,D_k))\mathbf{1}_{E_k} \geq (3F^{-1}(q_{k+1})-F^{-1}(q_k))\mathbf{1}_{E_k} \geq F^{-1}(q_{k+1})\mathbf{1}_{E_k},
\]
and this would show \eqref{eq: gain_on_E_k}.

We begin by proving \eqref{eq: part_one}, and to do this, we show that on the event $E_k$, any optimal path $\gamma_k^{\text{inv}}$ for $T^{\text{inv}}(C_k,D_k)$ must contain the edges $e_1,e_2,$ and $e_3$. Since these edges have weight $t_{e_i} \geq F^{-1}(q_{k+1})$, we would then obtain $T^{\text{inv}}(C_k,D_k) \geq t_{e_1}+t_{e_2}+t_{e_3} \geq 3F^{-1}(q_{k+1})$. The argument is similar for all three edges, so we show that $\gamma_k^{\text{inv}}$ contains $e_2$. Since $\gamma_k^{\text{inv}}$ crosses $\text{Ann}(3^{k+1},3^{k+2})$, by duality it must contain a edge $e$ whose dual is in $\{e_4^*\} \cup \gamma_1^3 \cup \{e_2^*\} \cup \gamma_2^3$. However, after the invasion touches the circuit $\gamma_1^1$ for the first time, it has access to infinitely many $p_c+\alpha(q_{k+1}-p_c)$-open edges (through the edges $e_1, e_2,e_3$). Because $C_k$ does not intersect the interior of $\gamma_1^1$ (it lies ``on or outside'' $\gamma_1^1$) all the edges of $\gamma_k^{\text{inv}}$ must then be $p_c+\alpha(q_{k+1}-p_c)$-open. Since $\gamma_1^3$ and $\gamma_2^3$ are $q_k$-closed, $e_4$ is $p_c+\beta(q_k-p_c)$-closed, $e_2$ is $p_c+\alpha(q_{k+1}-p_c)$-open, and Lemma~\ref{lem: alpha_beta_choice} implies that
\[
p_c + \alpha(q_{k+1}-p_c) < p_c + \beta(q_k-p_c) < q_k,
\]
it follows that $e_2 \in \gamma_k^{\text{inv}}$. This shows \eqref{eq: part_one}.

To complete this step, note that because $C_k$ is ``on or outside'' $\gamma_1^1$ and $D_k$ is ``on or inside'' $\gamma_2^1$, there is a path $\pi$ (through $e_4$)
from $C_k$ to $D_k$ with passage time equal to $F^{-1}(q_k)$. This implies $T(C_k,D_k) \leq T(\pi) \leq F^{-1}(q_k)$, which is \eqref{eq: part_two}.


\subsection{Step 2: Lower bound on $\mathbb{P}(E_k)$.}\label{sec: E_k_bound}



\begin{Prop}\label{prop: EkBoundedFromZero} 
There exists $c_{ 2.2.1 } > 0$ so that for all $k \geq 0$, $\P(E_k) \geq c_{ 2.2.1 }$.
\end{Prop}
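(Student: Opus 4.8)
The plan is to build the event $E_k$ from a bounded number of independent "building blocks," each of which has probability bounded below uniformly in $k$, and then glue them together using the FKG inequality. The scale $3^k$ is chosen so that on the annulus $\Ann(3^k, 3^{k+3})$ the picture is "one step" of near-criticality: since $q_{k+1} = p_{3^{k+1}}$ and $q_k = p_{3^k}$, the bound \eqref{eq: L_p_n} gives $L(q_{k+1}) \asymp 3^{k+1}$ and $L(q_k) \asymp 3^k$, so crossing probabilities at parameters $p_c$, $q_k$, and $q_{k+1}$ on boxes of side comparable to $3^k$ are all bounded away from $0$ and $1$, uniformly in $k$. This scale invariance is exactly what makes a uniform lower bound plausible.

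The key steps, in order, are as follows. First I would identify the finitely many "pieces" that make up $E_k$: (i) a $p_c$-open circuit $\gamma_1^1$ in $\Ann(3^k, 3^{k+1})$ around the origin and a $p_c$-open circuit $\gamma_2^1$ in $\Ann(3^{k+2}, 3^{k+3})$ around the origin; (ii) in each of the small boxes $B_1, B_2, B_3$ (each a translate of $R(3^k)$), the existence of an edge with $\omega$-value in the window $(q_{k+1}, p_c + \alpha(q_{k+1}-p_c))$ together with the prescribed $p_c$-open connections and the two $q_k$-closed dual arcs forming the dual circuit around $e_2^*$; (iii) in the box $B_4$, an edge with $\omega$-value in $(p_c + \beta(q_k - p_c), q_k)$ with its $p_c$-open connections to $\gamma_1^1$ and $\gamma_2^1$ and the two $q_k$-closed dual arcs forming the dual circuit around the origin; and (iv) a $q_{k+1}$-open path from $\gamma_2^1$ to $\infty$. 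Second, I would show each piece has probability bounded below uniformly in $k$: the circuits in (i) by the Russo–Seymour–Welsh theorem at $p_c$ (standard, $k$-independent); the one-arm-to-infinity connection in (iv) directly from \eqref{eq: connect_to_infinity} applied at scale $3^{k+1}$ (this is where property \eqref{eq: connect_to_infinity} is used); and the local configurations in (ii), (iii) by a combination of RSW-type constructions at $p_c$ for the connecting open/closed arcs inside $O(3^k)$-boxes, together with the observation that the probability that a \emph{specific} edge lies in the relevant $\omega$-window is $\alpha(q_{k+1}-p_c)$ or $(1-\beta)(q_k - p_c) + (q_k - p_{c})$-type quantities — which are small, but one only needs \emph{some} edge in a box of $\asymp 3^{2k}$ edges to be in the window, and the expected number of such edges is $\asymp 3^{2k}(q_{k+1}-p_c)$, which by the scaling relation \eqref{eq: scaling_relation} and $L(q_{k+1}) \asymp 3^{k}$ is $\asymp 1/\pi_4(3^k)$, i.e.\ large; a second-moment or direct FKG-with-RSW argument then yields a uniform lower bound on the probability that such an edge exists \emph{and} is correctly connected. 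Third, I would note that all of these events are increasing or decreasing in a way compatible with conditioning: one can reveal the configuration in stages (outer $p_c$-open circuit, then the $q_{k+1}$-open path to infinity in the unbounded region outside $\gamma_2^1$, then the local pictures in the disjoint boxes $B_1, \dots, B_4$), and at each stage the relevant conditional probability is bounded below; multiplying the finitely many bounds gives $c_{2.2.1}$.

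The main obstacle I anticipate is step (ii)/(iii): controlling the event that a box of side $\asymp 3^k$ contains an edge in the narrow parameter window $(q_{k+1}, p_c + \alpha(q_{k+1}-p_c))$ that is \emph{simultaneously} wired correctly — i.e.\ flanked by the right $p_c$-open arcs on the primal side and the right $q_k$-closed arcs on the dual side so that it becomes a "pivotal-type" edge sitting on the prescribed path between the two circuits. The clean way to handle this is to reformulate the requirement as: there is a $p_c$-open path from the appropriate boundary arc to $\partial B(x, 3^{k}/C)$ for some vertex $x$ in the box, two such paths in fact (so the edge is primal-pivotal-like), two $q_k$-closed dual paths on the complementary sides, and the central edge is in the window. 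This is precisely a four-arm event at scale $\asymp 3^k$ localized near $x$, whose probability is $\asymp \pi_4(3^k)$, times the window probability $\asymp (q_{k+1}-p_c) \asymp 1/(3^{2k}\pi_4(3^k))$ by \eqref{eq: scaling_relation}; summing (with quasi-independence / the second moment method) over the $\asymp 3^{2k}$ candidate locations $x$ in the box gives a quantity bounded below uniformly in $k$, and the FKG inequality lets us intersect this with the remaining (positively or negatively correlated, suitably decoupled) events. I would carry out this quasi-multiplicativity and second-moment estimate carefully, as it is the only place where the precise scaling relation \eqref{eq: scaling_relation} and the choice $q_k = p_{3^k}$ are essential; everything else is a routine RSW gluing argument with constants independent of $k$.
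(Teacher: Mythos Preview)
Your outline matches the paper's proof closely: the paper also decomposes $E_k$ into (i) the two $p_c$-open circuits via RSW, (ii) four-arm ``pivotal-type'' edges in each $B_i$ whose existence probability is obtained from the computation $3^{2k}\cdot|I_i|\cdot\pi_4(3^k)\asymp 1$ using \eqref{eq: scaling_relation}, (iii) macroscopic $q_k$-closed dual connections via RSW, and (iv) the $q_{k+1}$-open arm to infinity via \eqref{eq: connect_to_infinity}. Two small points where the paper is sharper than your sketch: first, the paper observes that the events ``edge $e$ is the four-arm edge in $B_i$'' are \emph{disjoint} for distinct $e$, so a first-moment sum replaces your proposed second-moment argument; second, the gluing cannot be done with ordinary FKG or a naive staged-revealing argument, because the pieces mix increasing ($p_c$-open), decreasing ($q_k$-closed dual), and non-monotone ($\omega_e$-in-window) events --- the paper invokes Kesten's arm-separation method and the generalized FKG inequality (\cite[Lem.~13]{NolinNearCritical}) to make the gluing rigorous, and you should name these tools explicitly rather than ``FKG'' alone.
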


\begin{proof} 
To give a lower bound for the probability of $E_k$, we use several gluing constructions, themselves composed of the RSW theorem, the (generalized) FKG inequality, and Kesten's arms separation method. Because these arguments are now standard, we will confine ourselves to a rough outline of the approach. The interested reader should pay close attention to Figure~\ref{fig:Config1} throughout the sketch.

For $i=1, \dots, 4$, let $F_i$ be the event that in the box $B_i$ in the definition of $E_k$, there exists an appropriate four-arm edge in the central box of half the size of $B_i$. Specifically, defining $B_i'$ to be the box with half the sidelength of $B_i$ but with the same center, we let $F_i$ be the event that there is an edge $e_i \in B_i'$ such that $\omega_{e_i} \in I_i$, $e_i$ is connected to the top and bottom sides of $B_i$ by two vertex-disjoint $p_c$-open paths, and $e_i^*$ is connected to the left and right sides of $B_i$ by two vertex-disjoint $q_k$-closed paths, where $I_i = (q_{k+1}, p_c + \alpha(q_{k+1}-p_c))$ for $i=1, 2, 3.$ Define $F_4$ similarly, but with $I_4 = (p_c + \beta(q_k-p_c), q_k)$, the $p_c$-open paths touching the left and right sides of $B_4$, and the $q_k$-closed dual paths touching the top and bottom sides.

We claim that for some $c_{2.2.2}>0$, one has 
\begin{equation}\label{eq: claim_1}
\mathbb{P}(F_i) \geq c_{2.2.2} \text{ for } i=1, \dots, 4, \text{ and for all }k \geq 0.
\end{equation} 
So fix such $i$ and $k$ and first note that for $e \subset B_i'$, if $F_e$ is the event that the edge $e$ satisfies the conditions described in the definition of $F_i$, then for distinct $e,f \subset B_i'$, the events $F_e$ and $F_f$ are disjoint. Therefore $\mathbb{P}(F_i) = \sum_{e \subset B_i'} \mathbb{P}(F_e)$. Because $L(q_k) \leq 3^k$ (from \eqref{eq: L_p_n}), one can use \cite[Lemma~6.3]{DamronRelations} to prove that $\mathbb{P}(F_e) \geq c_{2.2.3}\mathbb{P}(F_e')$ for some $c_{2.2.3} > 0 $, where $F_e'$ is defined similarly to $F_e$, but the $q_k$-closed paths are instead $p_c$-closed. Last, Kesten's arm separation method (see \cite[Theorem~11]{NolinNearCritical}) implies that $\mathbb{P}(F_e') \geq c_{2.2.4}|I_i|\pi_4(2^k)$ for some $c_{2.2.4} > 0 $, where $|I_i|$ is the length of the interval $I_i$ and $\pi_4$ is defined below \eqref{eq: scaling_relation}. Putting together these pieces, we obtain
\[
\mathbb{P}(F_i) = \sum_{e \subset B_i'} \mathbb{P}(F_e) \geq c_{2.2.3}\sum_{e \subset B_i'} \mathbb{P}(F_e') \geq c_{2.2.5}|I_i| \pi_4(2^k) 2^{2k} \geq c_{2.2.6} (q_{k+1}-p_c) \pi_4(2^{k+1}) 2^{2k}.
\]
for some $c_{2.2.5},c_{2.2.6} > 0 $. Using the scaling relation stated above in \eqref{eq: scaling_relation}, the right side is bounded below by $c_{2.2.7}>0$. This demonstrates the claim in \eqref{eq: claim_1}.

Now that we have constructed the four-arm edges in the boxes $B_i$, we need to create the other macroscopic connections. By the RSW theorem \cite{Russo, SeymourWelsh}, the FKG inequality, and independence, one has
\[
\mathbb{P}(J) \geq c_{2.2.8} \text{ for all } k \geq 0,
\]
for some $c_{2.2.8} > 0$, where $J$ is the event that the following occur: 
\begin{enumerate}
\item There is a $p_c$-open circuit around the origin in $\text{Ann}(3^k,3^{k+1})$ which is connected by a $p_c$-open path in this annulus to the bottom side of $B_1$ and by another $p_c$-open path in this annulus to the left side of $B_4$,
\item there is a $p_c$-open circuit around the origin in $\text{Ann}(3^{k+2},3^{k+3})$ which is connected by a $p_c$-open path in this annulus to the top side of $B_3$ and by another $p_c$-open path in this annulus to the right side of $B_4$, and
\item there are $q_k$-closed dual paths in the following regions: (a) one connecting the left side of $B_2$ to the bottom side of $B_4$, and one connecting the left side of $B_1$ to the left side of $B_3$, all in the component of $\text{Ann}(3^{k+1},3^{k+2}) \setminus \cup_i B_i$ that contains the point $(-2 \cdot 3^{k+1},0)$, and (b) one connecting the right side of $B_2$ to the top side of $B_4$, and one connecting the right side of $B_1$ to the right side of $B_3$, all in the component of $\text{Ann}(3^{k+1},3^{k+2}) \setminus \cup_i B_i$ that contains the point $(2\cdot 3^{k+1}, 2\cdot 3^{k+1})$.
\end{enumerate}
The paths described in $J$ must be ``connected'' to the four-arm edges described in the events $F_i$, and this is done with the generalized FKG inequality (see \cite[Lem.~13]{NolinNearCritical}). Specifically, if $\hat{J}$ is the event that $\cap_i F_i \cap J$ occurs, but with the additional stipulations that: 
\begin{enumerate}
\item the first open connection in $\text{Ann}(3^k,3^{k+1})$ described in item 1 of the definition of $J$ is connected in this annulus to the ``lower'' $p_c$-open arm in $B_1$ and the second is connected to the ``left'' $p_c$-open arm in $B_4$,
\item the first open connection in $\text{Ann}(3^{k+2},3^{k+3})$ described in item 2 of the definition of $J$ is $p_c$-connected in this annulus to the ``upper'' $p_c$-open arm in $B_3$ and the second is $p_c$-connected to the ``right'' $p_c$-open arm in $B_4$,
\item the ``upper'' $p_c$-open arm in $B_1$ is $p_c$-connected to the ``lower'' $p_c$-open arm in $B_2$, and the ``upper'' $p_c$-open arm in $B_2$ is $p_c$-connected to the ``lower'' $p_c$-open arm in $B_3$,
\item the first $q_k$-closed dual path described in item 3(a) of the definition of $J$ is $q_k$-connected to the ``left'' $q_k$-closed arm in $B_2$ and the ``bottom'' $q_k$-closed arm in $B_4$, and the second is $q_k$-connected to the ``left'' $q_k$-closed arm in $B_1$ and the ``left'' $q_k$-closed arm in $B_3$, and
\item the first $q_k$-closed path described in item 3(b) of the definition of $J$ is $q_k$-connected to the ``right'' $q_k$-closed arm in $B_2$ and the ``top'' $q_k$-closed arm in $B_4$, and the second is $q_k$-connected to the ``right'' $q_k$-closed arm in $B_1$ and the ``right'' $q_k$-closed arm in $B_3$,
\end{enumerate}
then
\[
\mathbb{P}(\hat J) \geq c_{2.2.9} \text{ for all } k \geq 0.
\]

Finally, we must combine the event $\hat J$ with the connection to infinity. Letting $H$ be the event that there is a $q_{k+1}$-open path connecting $B(3^{k+2})$ to infinity, then by \eqref{eq: connect_to_infinity}, one has $\mathbb{P}(H) \geq c_{2.2.10}$ for all $k \geq 0$. To combine this with $\hat J$, we again use the generalized FKG inequality. It implies that
\[
\mathbb{P}(\hat J \cap H) \geq c_{2.2.11} \text{ for all }  k \geq 0.
\]
Because $\hat J \cap H$ implies the event $E_k$, this completes the sketch of the proposition.
\end{proof}

\subsection{Step 3: Good indices and the end of the proof.}\label{sec: end}

In this last step of the proof, we first prove a lemma which will imply that in the case that $(x_k) = (F^{-1}(q_k))$ is not summable, the sum of $F^{-1}(q_k)$ over all $k \in G$ for $k \leq n$ is comparable to the sum over all $k \leq n$. Recall that $G$ is the ``good'' set of indices defined in \eqref{eq: G_def} for which the lower bound for $T^{\text{inv}}(C_k,D_k) - T(C_k,D_k)$ from \eqref{eq: gain_on_E_k} holds. We will use this lemma along with \eqref{eq: gain_on_E_k} to prove Theorem~\ref{thm: main_thm} afterward.

\begin{Lemma}\label{lem: Good_Indices}
 Let $(x_k)_{k \geq 0 }$ be a nonnegative monotone nonincreasing sequence.
Then for all $n \geq 0$,
\[
\sum_{k \leq n} x_k \leq 3x_0 + 3 \sum_{\substack{k \in G \\ k \leq n}} x_{k+1},
\]
where $ G = \{ k : x_{k}/x_{k+1} < 2 \} $.
\end{Lemma}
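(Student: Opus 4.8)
The plan is to partition the index set $\{0, 1, \dots, n\}$ according to membership in $G$, and show that each "bad" index (one not in $G$) can be charged to a nearby good index (or to the index $0$) without overcounting too severely. Recall that $k \notin G$ means $x_k \geq 2 x_{k+1}$, i.e., the sequence at least halves when passing from $k$ to $k+1$. So a run of consecutive bad indices forces geometric decay, and the sum over such a run is controlled by its first term. The factor of $3$ and the shift to $x_{k+1}$ on the right-hand side suggest the intended bookkeeping: each good index $k$ (contributing $x_{k+1}$ on the right) should "pay for" itself and a bounded number of bad indices that follow it, using the monotonicity $x_k \geq x_{k+1} \geq \dots$ together with the geometric decay along bad runs.

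Concretely, here is the scheme I would carry out. Decompose $\{0, \dots, n\}$ into maximal blocks, where each block consists of one index $k$ that is either $0$ or a good index, followed by the maximal run of bad indices $k+1, k+2, \dots, k+j$ immediately after it (before the next good index or before $0$... more precisely, reading left to right, whenever we hit an index in $G \cup \{0\}$ we start a new block). Actually, cleaner: for each $k \in \{0,\dots,n\}$, define its "sponsor" $s(k)$ to be the largest index $\le k$ that lies in $G \cup \{0\}$; I would first check such an index always exists (since $0$ qualifies). Then group indices by their sponsor. If $s(k) = m$, then $m, m+1, \dots, k-1$ are all bad (none in $G$), so $x_{m} \geq 2 x_{m+1} \geq \dots \geq 2^{k-m} x_k$, hence $x_k \leq 2^{-(k-m)} x_m$. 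Summing over all $k$ with sponsor $m$ gives $\sum_{k : s(k) = m} x_k \leq x_m \sum_{i \geq 0} 2^{-i} = 2 x_m$.

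To finish, I would sum this bound over the distinct sponsors. Each sponsor $m$ is either $0$ or an element of $G$ with $m \leq n$, and $x_m \leq x_{m-1+1}$... here I need to relate $x_m$ to the right-hand side quantity. For $m \in G$ with $m \le n$, monotonicity gives $x_m \leq x_0$ trivially but that is too lossy; instead note that if $m \in G$ then $x_m < 2 x_{m+1}$, so $2 x_m < 4 x_{m+1}$, which overshoots the claimed constant $3$. The sharper move: $\sum_{k : s(k)=m} x_k \le 2x_m$, and for $m \in G$ we have $x_m \le 2 x_{m+1}$, giving $2 x_m \le 4 x_{m+1}$ — still a $4$. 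So I would tighten the run estimate: when $m \in G$, the block is $\{m\} \cup \{m+1, \dots\}$ where $x_m < 2x_{m+1}$ but the bad indices past $m+1$ still decay; better, handle $x_m$ alone (it is $< 2x_{m+1} \le 2 x_{m+1}$) and handle the tail $\sum_{i \ge 1} x_{m+i} \le x_{m+1}\sum_{i\ge 0} 2^{-i} = 2 x_{m+1}$ using that $m+1, m+2, \dots$ are bad. This gives $\sum_{k: s(k)=m} x_k \le x_m + 2x_{m+1} < 2x_{m+1} + 2 x_{m+1}$ — hmm, that is $4 x_{m+1}$ again; I will instead observe $x_m + 2 x_{m+1} \le 2x_{m+1}+2x_{m+1}$ is wasteful and that actually $x_{m}\le 2x_{m+1}$ combined with the tail bound $\sum_{i\ge 2}x_{m+i}\le x_{m+1}$ (geometric with ratio $\le 1/2$ starting from $x_{m+2}\le x_{m+1}/2$) yields $\sum_{k:s(k)=m}x_k = x_m + x_{m+1} + \sum_{i \ge 2} x_{m+i} \le 2x_{m+1} + x_{m+1} + x_{m+1}$. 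The bookkeeping needs care to land exactly on $3$; the clean version is: $\sum_{k:s(k)=m} x_k \le x_m + 2 x_{m+1} \le 3 x_{m+1}$ is FALSE in general, so the right accounting must use $x_m$ itself. I expect the correct grouping charges the sponsor-$0$ block to $3x_0$ and, for $m \in G$, bounds $\sum_{k : s(k) = m, k \ge m} x_k$ by comparing to $3x_{m+1}$ via $x_m \le 2x_{m+1}$ plus a tail of total mass $\le x_{m+1}$, since the post-$(m+1)$ bad run sums to at most $x_{m+2} + x_{m+3} + \cdots \le x_{m+1}(1/2 + 1/4 + \cdots) = x_{m+1}$. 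Thus $x_m + x_{m+1} + (\text{tail}) \le 2x_{m+1} + x_{m+1} + x_{m+1}$ is $4x_{m+1}$ — so evidently the intended argument does not put the tail with sponsor $m$ but rather associates each bad index $k+1,k+2,\dots$ with its own nearest good predecessor differently, or simply absorbs $x_m \le x_{m-1}$... The genuinely clean route, which I would write up: sum $\sum_{k\le n} x_k$, split off $x_0 + x_1 + x_2$ (bounded by $3x_0$), and for each $k \ge 2$ in the sum with $k \notin G\cup\{0\}$...

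**Main obstacle.** The real difficulty is purely combinatorial/arithmetic: getting the constant to be exactly $3$ rather than some larger constant. The geometric-decay idea and the sponsor/charging framework are routine, but the precise apportionment — deciding exactly which bad indices are charged to which good index so that the multiplicities and the $1/2$-ratio geometric series combine to give the factor $3$ and the single $x_{k+1}$ shift — requires a careful, slightly delicate inductive or telescoping argument. I would most likely prove it by induction on $n$: the inductive step removes the top block $\{m, m+1, \dots, n\}$ where $m = s(n)$, bounds $\sum_{k=m}^n x_k$ appropriately (distinguishing $m=0$, handled by $3x_0 \ge x_0 + x_1 + x_2 \ge \sum_{k=m}^n x_k$ when there is no good index at all, from $m \in G$, handled by the geometric bound giving $\le 3x_{m+1}$ — re-examining, when $m \in G$ the run $m+1,\dots,n$ is bad so $x_{m+1} + \cdots + x_n \le 2 x_{m+1}$ and $x_m \le 2 x_{m+1}$, total $\le 3 x_{m+1}$ only if we use $x_m \le x_{m+1} + x_{m+1}$ wastefully — actually $x_m \le 2x_{m+1}$ and $x_{m+1}+\cdots+x_n \le x_{m+1}+x_{m+1} = 2x_{m+1}$ would give $4$; but if instead $m+1 \le n$ we can note the bad run from $m+1$ means $x_{m+1}+x_{m+2}+\cdots \le x_{m+1}(1 + 1/2 + \cdots) = 2x_{m+1}$, and then $x_m + 2x_{m+1} \le 2x_{m+1} + 2x_{m+1}$; the discrepancy shows I must NOT include $x_m$ with this block's sponsor but rather the sponsor of the block is used only for the bad tail), then apply the inductive hypothesis to $\{0,\dots,m-1\}$. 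Reconciling these constants is the step I expect to spend the most care on; everything else is bookkeeping with a convergent geometric series of ratio at most $1/2$.
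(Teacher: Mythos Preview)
Your decomposition into maximal bad runs and the geometric-decay estimate $\sum_{\ell=0}^m x_{k+\ell}\le 2x_k$ for a bad run starting at $k$ are exactly what the paper uses. The gap is precisely the one you flag: you never close the bookkeeping to reach the constant $3$, and every variant you try (charging $x_m$ to its own sponsor via $x_m<2x_{m+1}$) lands at $4$.

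The paper's resolution is a one-line shift you are circling around but not hitting. Do \emph{not} try to bound a good index's own term $x_m$ by a multiple of $x_{m+1}$. Instead, note that if a maximal bad run begins at index $a$ then either $a=0$ or $a-1\in G$; hence the run's total $\le 2x_a = 2x_{(a-1)+1}$ is charged to the good index $a-1$. This gives
\[
\sum_{\substack{k\in G^c\\ k\le n}} x_{k+1} \;\le\; \sum_{\substack{k\in G^c\\ k\le n}} x_k \;\le\; 2x_0 + 2\sum_{\substack{k\in G\\ k\le n}} x_{k+1},
\]
the first inequality by monotonicity. Now add $x_0 + \sum_{k\in G,\,k\le n} x_{k+1}$ to the outer terms: the left becomes $x_0 + \sum_{k\le n} x_{k+1}\ge \sum_{k\le n} x_k$, and the right becomes exactly $3x_0 + 3\sum_{k\in G,\,k\le n} x_{k+1}$. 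In your sponsor language, the point is that each good index $m$ pays $x_{m+1}$ once for itself \emph{via the global index shift} $\sum_k x_k \le x_0+\sum_k x_{k+1}$, and pays $2x_{m+1}$ for the bad run beginning at $m+1$; routing $x_m$ through the shift rather than through $x_m<2x_{m+1}$ is exactly what drops $4$ to $3$.
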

\begin{proof}
If $k, k+ 1, \dots, k+m \in G^c$ and $1 \leq \ell \leq m+1$, one has
\[
x_{k+\ell} \leq \frac{x_{k+\ell-1}}{2} \leq \dots \leq \frac{x_k}{2^\ell},
\]
and so
\[
\sum_{\ell=0}^m x_{k+\ell} \leq x_k + x_k \sum_{\ell=1}^{m+1} 2^{-\ell} \leq 2 x_k.
\]
By partitioning $G^c$ into a collection of maximal disjoint intervals and applying this inequality to each such interval, we obtain
\[
\sum_{\substack{k \in G^c \\ k \leq n}} x_{k+1} \leq \sum_{\substack{k \in G^c \\ k \leq n}} x_k \leq 2x_0 + 2 \sum_{\substack{k \in G \\ k \leq n}} x_{k+1}.
\]
Here, we have used that if $k$ is the first element of an interval in $G^c$, then $x_{k-1} \in G$, unless $k=0$. Adding $x_0 + \sum_{k \in G, k \leq n} x_{k+1}$ to both sides completes the proof of the lemma.
\end{proof}


\begin{proof}[Proof of Theorem~\ref{thm: main_thm}]
Now we prove the main theorem. The main step is to show that for $i=0,1,2$,
\begin{equation}\label{eq: last_part}
T^{\text{inv}}(0,\partial B(n)) - T(0,\partial B(n))  \geq \sum_{ \substack{ k : 3k+i \in G \\ 3k+i \leq \lfloor \log_3 n\rfloor - 3 } } F^{-1}(q_{3k+i+1}) \mathbf{1}_{E_{3k+i}} .
\end{equation}
To justify this inequality, recall that for a given $k$ such that $E_k$ occurs, $C_k$ and $D_k$ have zero weight and are therefore in the invasion by \eqref{eq: contain_circuits}, so one has
\[
T(A,B) = T(A,C_k) + T(C_k,D_k) + T(D_k,B)
\]
and
\[
T^{\text{inv}}(A,B) = T^{\text{inv}}(A,C_k) + T^{\text{inv}}(C_k,D_k) + T^{\text{inv}}(D_k,B)
\]
for any $A \subset B(3^k)$ and $B \subset B(3^{k+3})^c$. For a given $i=0,1,2$, therefore, if $3k+i \leq \lfloor \log_3 n \rfloor - 3$, then
\begin{align*}
&(T^{\text{inv}}(0,\partial B(n)) - T(0,\partial B(n)))\mathbf{1}_{E_{3k+i}} \\
=~& \bigg[ (T^{\text{inv}}(0,C_{3k+i}) - T(0,C_{3k+i})) + (T^{\text{inv}}(C_{3k+i}, D_{3k+i}) - T(C_{3k+i}, D_{3k+i})) \\
+~& (T^{\text{inv}}(D_{3k+i}, \partial B(n)) - T(D_{3k+i}, \partial B(n))) \bigg] \mathbf{1}_{E_{3k+i}}.
\end{align*}
In this way, we decouple the passage times between circuits. Applying this idea to all the circuits $C_{3k+i}, D_{3k+i}$ for $3k+i \leq \lfloor \log_3 n \rfloor - 3$ with $3k+i \in G$, and using that $T^{\text{inv}} \geq T$, we obtain
\begin{equation}\label{eq: pizza_head}
T^{\text{inv}}(0,\partial B(n)) - T(0,\partial B(n)) \geq \sum_{\substack{k : 3k+i \in G \\ 3k+i \leq \lfloor \log_3 n \rfloor - 3}} (T^{\text{inv}}(C_{3k+i},D_{3k+i}) - T(C_{3k+i},D_{3k+i})) \mathbf{1}_{E_{3k+i}}.
\end{equation}
(Here we have chosen indices of the form $3k+i$ to ensure that the annuli associated to the events $E_{3k+i}$ are disjoint.) 
Combining this with \eqref{eq: gain_on_E_k}, we obtain \eqref{eq: last_part}.

Averaging \eqref{eq: last_part} over $i=0,1,2$ produces
\[
T^{\text{inv}}(0,\partial B(n)) - T(0,\partial B(n))  \geq \frac{1}{3} \sum_{ \substack{ k \in G \\ k \leq \lfloor \log_3 n\rfloor - 3 } } F^{-1}(q_{k+1}) \mathbf{1}_{E_k}.
\]
By Proposition~\ref{prop: EkBoundedFromZero}, this becomes
\[
\mathbb{E}(T^{\text{inv}}(0,\partial B(n)) - T(0,\partial B(n)))  \geq \frac{c_{2.2.1}}{3} \sum_{\substack{ k \in G \\ k \leq \lfloor \log_3 n\rfloor - 3 }} F^{-1}(q_{k+1}).
\]
Lemma~\ref{lem: Good_Indices} then implies
\[
\mathbb{E}(T^{\text{inv}}(0,\partial B(n)) - T(0,\partial B(n)))  \geq  \frac{c_{2.2.1}}{3}\left[  - F^{-1}(q_0) + \frac{1}{3} \sum_{ k \leq \lfloor \log_3 n\rfloor - 3 } F^{-1}(q_k) \right].
\]
Using \eqref{eq: comparable_sum}, this implies the inequality of Theorem~\ref{thm: main_thm} if $\sum_k F^{-1}(q_k) = \infty$.

\begin{center}
	\begin{figure}
  \includegraphics[width=0.6\linewidth]{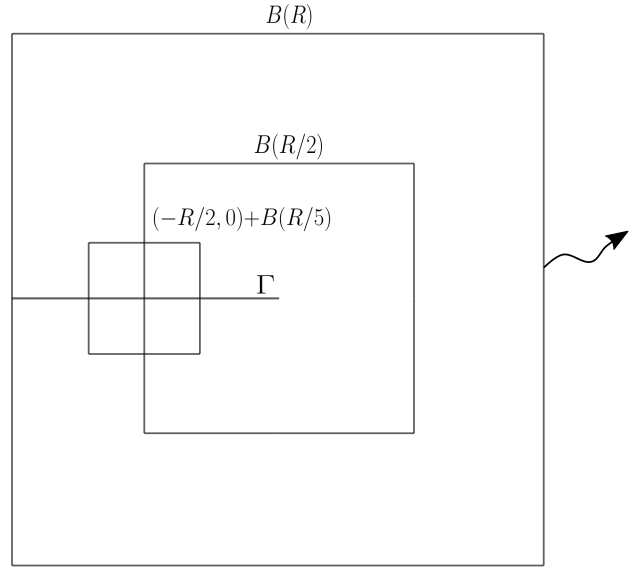}
  \caption{Illustration of the event $A$. The arrowed curve emanating from $B(R)$ has edges with weight $\leq a$ and all edges on $\partial B(R)$ have weight zero. The edges in the segment $\Gamma$ (which starts at the origin) have weight in the interval $[a/2,a]$, and edges touching $\partial B(R/2)$ (and in the box $(-R/2,0) + B(R/5)$) but not in $\Gamma$ have weight $\geq b$. All other edges in $B(R)$ have weight zero. On this event, any path from the origin to $\partial B(R)$ in the invasion must contain the segment of $\Gamma$ passing through $\partial B(R/2)$ on the left, and must therefore pick up at least weight $aR/5$.}
  \label{fig: fig_2}
\end{figure}
\end{center}

If $\sum_k F^{-1}(q_k) < \infty$, then we explicitly construct an event $A$ on which geodesics in $I$ have higher weight than true geodesics.  First pick $a,b$ with $0 < a < b$ such that $\mathbb{P}(t_e \in [a/2,a]) > 0$ and $\mathbb{P}(t_e \geq b) > 0$. (If this is impossible, then $\sum_k F^{-1}(q_k) =\infty$.) Fix an integer $R$ which is a multiple of 10 and satisfies
\begin{equation}\label{eq: R_def}
R \geq 10b/a,
\end{equation} 
and let $\Gamma$ be the set of edges of the form $\{(-n-1,0),(-n,0)\}$ for $0 \leq n \leq R-1$. Last, define $A$ to be the event that 
\begin{enumerate}
\item $B(R)$ is connected to infinity by a path of edges $e$ with $t_e \leq a$,
\item all edges $e$ with both endpoints in $\partial B(R)$ have $t_e=0$,
\item all edges $e \in \Gamma$ have $t_e \in [a/2,a]$,
\item all edges $e \notin \Gamma$ with one endpoint in $B(R/2)$ and one endpoint in $B(R/2)^c$ have $t_e \geq b$
\item all edges $e \notin \Gamma$ with both endpoints within $\ell^\infty$ distance $R/5$ of $(-R/2,0)$ have $t_e \geq b$, and
\item all other edges $e$ with both endpoints in $B(R)$ have $t_e=0$.
\end{enumerate}
(See Figure~\ref{fig: fig_2} for an illustration of the event $A$.) We claim that
\begin{equation}\label{eq: final_claim}
\mathbb{E}[T^{\text{inv}}(0, \partial B(n)) - T(0, \partial B(n))] \geq b\mathbb{P}(A) > 0 \text{ for } n \geq R.
\end{equation}
Assuming this claim, the statement of Theorem~\ref{thm: main_thm} follows from \eqref{eq: comparable_sum} if $\sum_k F^{-1}(q_k) < \infty$.

To show \eqref{eq: final_claim}, we show that the difference of passage times is at least $b$ on the event $A$. Because $A$ has positive probability (conditions (2)-(6) are clear, and for condition (1), we use that $\mathbb{P}(t_e \leq a) > 1/2$, and so with positive probability, any given vertex on $\partial B(R)$ is connected to infinity by a path outside $B(R)$ all whose edges have weight $\leq a$), this will complete the proof. First note that due to item (2), on $A$ we have
\[
T^{\text{inv}}(0, \partial B(n)) - T(0, \partial B(n)) \geq T^{\text{inv}}(0, \partial B(R)) - T(0, \partial B(R)) \text{ for } n \geq R.
\]
Next, since there is an infinite edge-self avoiding path starting at 0 whose edges have weight $\leq a$ (just follow $\Gamma$ to $\partial B(R)$ and then to infinity using item (1)), all edges $e$ in $I$ satisfy $t_e \leq a$. Therefore each path in $I$ connecting 0 to $\partial B(R)$ must contain all edges in $\Gamma$ with both endpoints within $\ell^\infty$ distance $R/5$ of $(-R/2,0)$. This implies by item (3) that on $A$, one has
\[
T^{\text{inv}}(0, \partial B(R)) \geq \frac{2R}{5} \cdot \frac{a}{2} = \frac{aR}{5}.
\]
On the other hand, there exists a path from 0 to $\partial B(R)$ with passage time equal to $b$: simply follow the positive $e_1$-axis. Therefore on $A$, one has
\[
T(0, \partial B(R)) \leq b.
\]
By the definition of $R$ in \eqref{eq: R_def}, $aR/5 - b \geq b$, and this shows \eqref{eq: final_claim}, completing the proof of Theorem~\ref{thm: main_thm}.
\end{proof}

\bigskip
\noindent
{\bf Acknowledgements.} The research of M. D. is supported by an NSF CAREER grant.

\end{document}